\newcommand{\DCE}{\delta_\text{\tiny{CE}}}
\title{Characteristic (Fedosov-)class of a twist constructed by Drinfel'd}
\date{}
\author{
\textbf{Jonas Schnitzer}\thanks{\texttt{jschnitzer@unisa.it}}\\[0.5cm]
 Dipartimento di Matematica \\
  Università degli Studi di Salerno \\
  Via Giovanni Paolo II, 132\\
  84084  Fisciano (SA) \\
  Italy}
\begin{document}

\maketitle
\begin{abstract}
In a seminal paper Drinfel'd explained how to associate to every classical r-matrix for a Lie algebra $\lie g$ a twisting 
element based on $\mathcal{U}(\lie g)[[\hbar]]$, or equivalently a left invariant star product of the corresponding 
symplectic structure $\omega$ on the 1-connected Lie group G of g. In a recent paper, the authors solve the same problem by 
means of Fedosov quantization. In this short note we provide a connection between the two constructions by computing the 
characteristic (Fedosov) class of the twist constructed by Drinfel'd and proving that it is the trivial class given by $
\frac{[\omega]}{\hbar}$.
\end{abstract}

\section*{Introduction}
The notion of Drinfel'd twists goes back to the 80's in the context of \emph{Quantum Groups}, see e.g. the textbooks 
\cite{Etingof2002}, \cite{kassel1994quantum} and \cite{chari1995guide}. 
It is still an active field of research, since it is a powerful (functorial) tool used in Quantum field theory to quantize 
classical field 
theories, see e.g. \cite{AS2014}, and references therein. This application is due to the fact that a Drinfel'd twist together 
with a suitable Hopf algebra action always induces a deformation quantization. To be more precise, if a Lie algebra $\lie g$ acts on an associative $\algebra A$ by derivations via $\phi \colon \lie g\to \algebra A$, 
then a \emph{formal Drinfel'd twist} $\mathcal{F}\in 
\mathcal{U}(\lie g)\tensor \mathcal{U}(\lie g)[[\hbar]]$ induces a deformation of the associative product $\mu\colon \algebra A\times \algebra A\to \algebra A$ by 
	\begin{align*}
	a\star_\mathcal{F} b= \mu(\Phi\tensor \Phi(F)(a\tensor b))
	\end{align*}
for $a,b\in \algebra{A}[[\hbar]]$ and the extended action $\Phi\colon \mathcal{U}(\lie g)\to \Diffop(\mathcal{A})$. The properties of the twist (see Definition \ref{Def: Twist}) ensure that this product is associative again and that it is a formal deformation in the sense of Gerstenhaber \cite{Gerstenhaber}.

The existence of a Drinfel'd twist was first shown by 
Drinfel'd himself in \cite{Drinfeld} using the fact that a Drinfel'd twist is equivalent to a left-invariant star product and  
the construction of the  canonical star product on the dual of a Lie algebra, 
which is nowadays called Gutt-star product, since in \cite{Gutt} it is given a construction of it.
There are just very few explicit examples known, see e.g. \cite{GIAQUINTO1998133}, besides the \emph{Weyl-Moyal twist}, 
even though they are 
completely classified via there infinitesimal counter parts, the $r$-matrices, in \cite{Halbout2006} via Formality. 
This approach even provides 
a full  classifiaction  of Drinfel'd twists in terms of their infintesimal counterparts, the so-called \emph{r-matrices}.
Recall that two twists $\mathcal{F},\mathcal{F}'\in \mathcal{U}(\lie g)\tensor \mathcal{U}(\lie g)[[\hbar]]$ are said to be 
equivalent, if there exists a $S\in \mathcal{U}(\lie g)[[\hbar]]$ with $S=\id+\mathcal{O}(\hbar)$ and $\varepsilon(S)=1$, 
such that 
	\begin{align*}
	\mathcal{F}'=\Delta(S)^{-1}\cdot\mathcal{F}\cdot(S\tensor S),
	\end{align*}	
which induces an equivalence, i.e. an ismorphism $\Phi(S)\in \Diffop(\algebra A)$, of the induced deformations $\star_\mathcal{F}$ and $\star_{\mathcal{F}'}$ (see \cite{me} for more details). 
In \cite{me}, a construction of 
Drinfel'd twists and their classification
is provided with the help of a modified  Fedosov-construction, see \cite{Fedosov1994},  for symplectic Lie algebras. 
In fact, if the given 
Lie algebra is real the construction in \cite{me} can be seen as the original Fedosov-construction on a Lie group with a 
left-invariant connection.
Additionally,  the (invariant) classification result boils down to the one obtained in 
\cite{BBG1998}, where it is shown that the equivalence classes of left-invariant star products is isomorphic to
	\begin{align*}
	\frac{[\omega]}{\hbar}+ H^2_{\text{dR}}(G)^G[[\hbar]],
	\end{align*}
where the reference point  $\omega$ is the left-invariant symplectic structure of the Lie group and is just convention. 
Moreover,  $H_{\text{dR}}(G)^G$ is the cohomology of the left-invariant differential forms.  
The purpose of this note is now straight forward: computing the characteristic class in the sense of \cite{me} of the 
Drinfel'd twist obtained in \cite{Drinfeld}. To do so we proceed as follows: 
First, we recall the original construction of Drinfel'd and of the Gutt-star product and use the equivalence of Drinfel'd 
twists and left-invariant star products to obtain the chain of equivalences: 
	\begin{align*}
	\text{ Drinfel'd's Twist } 
	\sim \text{ invariant Kontsevich star product } \sim \text{ invariant Fedosov star product },
	\end{align*}      
where the first equivalence is provided in \cite{Dito1999} and the second equivalence is proven in \cite{Stefan}.
As a corollary we can compute the characteristic class of the Drinfel'd twist constructed by Drinfel'd in terms of the 
invariant Fedosov class of the left-invariant star product it induces. This is by construction in \cite{me} the class of the 
twist inducing it.

We are assuming the reader is familiar with the notion of star products and their different classifications due to 
\cite{Kontsevich2003} for Poisson manifolds and \cite{Fedosov1994} for symplectic manifolds, as well as the invariant 
counterparts \cite{D2005} and \cite{BBG1998}.  

\section{The Drinfel'd twist construction and the Gutt star product}
Drinfel'd twists can be defined on arbitrary Hopf algebras (in fact even just bialgebras) (see e.g. \cite{Giaquinto1998}), but throughout this note we use the following 
\begin{definition}\label{Def: Twist}
Let $\lie g$ be a Lie algebra. A (formal) Drinfel'd twist is an element 
$\mathcal{F}\in \mathcal{U}(\lie g)\tensor \mathcal{U}(\lie g)[[\hbar]]$, such that
	\begin{enumerate}
	\item $(\Delta\tensor \id )(\mathcal{F})\cdot(\mathcal{F}\tensor 1)
	=(\id\tensor\Delta)(\mathcal{F})\cdot(1\tensor \mathcal{F})$
	\item $(\varepsilon\tensor \id)(F)=(\id\tensor \varepsilon) (\mathcal{F})=1$
	\item $\mathcal{F}=1\tensor 1 +\mathcal{O}(\hbar ^2)$
	\end{enumerate}
for the usual Hopf algebra structures $(\mathcal{U}(\lie g),\cdot,i, \Delta,\varepsilon, S)$
\end{definition}

A Drinfel'd twist has a classical limit, which can be seen as the equivalent to Poisson structures in deformation 
quantization.

\begin{lemma}
Let $\lie g$ be a Lie algebra and let $\mathcal{F}\in \mathcal{U}(\lie g)\tensor \mathcal{U}(\lie g)[[\hbar]]$ be a 
Drinfel'd twist, then 
	\begin{align*}
	r:=F_1-F_1^{\mathrm{opp}}\in \Anti^2\lie g \subseteq \mathcal{U}(\lie g)\tensor \mathcal{U}(\lie g)
	\end{align*}
is a $r$-matrix, i.e. $[\![r,r]\!]=0$.
\end{lemma}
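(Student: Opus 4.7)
Expand
\[
\mathcal{F} = 1\tensor 1 + \hbar F_1 + \hbar^2 F_2 + \mathcal{O}(\hbar^3)
\]
and compare coefficients of $\hbar$ in the three defining axioms. Axiom~(ii) immediately forces $(\varepsilon\tensor\id)(F_1)=(\id\tensor\varepsilon)(F_1)=0$, so $F_1\in I\tensor I$ with $I=\ker\varepsilon$ the augmentation ideal of $\mathcal{U}(\lie g)$. The order-$\hbar$ piece of axiom~(i) reads
\[
(\Delta\tensor\id)(F_1) + F_1\tensor 1 = (\id\tensor\Delta)(F_1) + 1\tensor F_1,
\]
so $F_1$ is a $2$-cocycle in the cobar complex of the coalgebra $\mathcal{U}(\lie g)$. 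Applying the flip of the outermost tensor slots to this identity and using cocommutativity $\Delta^{\mathrm{opp}}=\Delta$, the same equation holds for $F_1^{\mathrm{opp}}$, and by linearity the antisymmetrization $r:=F_1-F_1^{\mathrm{opp}}$ is itself an antisymmetric $2$-cocycle in $I\tensor I$.

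To upgrade this to $r\in\Anti^2\lie g$, I would transport everything to $S(\lie g)$ via the symmetrization/PBW isomorphism, under which $\Delta$ becomes the shuffle coproduct and $I$ corresponds to $\bigoplus_{k\ge 1} S^k\lie g$. Decompose $r=\sum_{p,q\ge 1} r^{(p,q)}$ with $r^{(p,q)}\in S^p\lie g\tensor S^q\lie g$; antisymmetry gives $r^{(q,p)}=-(r^{(p,q)})^{\mathrm{opp}}$. Suppose $r^{(p,q)}$ is non-zero for some $(p,q)$ with $n:=p+q\ge 3$, and take $n$ maximal. Project the cocycle equation to a trigrade $(a,b,c)$ with $a,b,c\ge 1$ and $a+b+c=n$: the terms $r\tensor 1$ and $1\tensor r$ drop out, and only the top components $r^{(a+b,c)}$ and $r^{(a,b+c)}$ can contribute through the shuffle coproduct. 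In the baby case $(p,q)=(2,1)$, $(a,b,c)=(1,1,1)$, writing $r^{(2,1)}=m\tensor\ell$ with $m\in S^2\lie g$, $\ell\in\lie g$, the projected equation becomes
\[
(x\tensor y+y\tensor x)\tensor\ell + \ell\tensor(x\tensor y+y\tensor x)=0,
\]
whose only solution is $r^{(2,1)}=0$ by linear independence in $\lie g\tensor\lie g\tensor\lie g$. The general case is analogous: the image of $(\Delta\tensor\id)(r^{(a+b,c)})$ has its slot-$3$ fixed to a slot of $r^{(q,p)}$, while $(\id\tensor\Delta)(r^{(a,b+c)})$ has its slot-$1$ fixed to a slot of $r^{(p,q)}$, and these cannot match unless both vanish. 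Hence $r=r^{(1,1)}\in\lie g\tensor\lie g$, and antisymmetry places $r$ in $\Anti^2\lie g$.

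For the classical Yang--Baxter identity, extract the order-$\hbar^2$ coefficient of axiom~(i),
\[
(\Delta\tensor\id)(F_2)+(\Delta\tensor\id)(F_1)\cdot(F_1\tensor 1)+F_2\tensor 1=(\id\tensor\Delta)(F_2)+(\id\tensor\Delta)(F_1)\cdot(1\tensor F_1)+1\tensor F_2,
\]
and then fully antisymmetrize in the three tensor slots. By cocommutativity all three $F_2$-linear terms cancel, and since the symmetric part of $F_1$ contributes only symmetrically, one may replace $F_1$ by $\tfrac12 r$ in the quadratic-in-$F_1$ remainder. Using $\Delta(x)=x\tensor 1+1\tensor x$ for $x\in\lie g$, the remainder collapses to $[r_{12},r_{13}]+[r_{12},r_{23}]+[r_{13},r_{23}]$, which is precisely $[\![r,r]\!]$; the antisymmetrized cocycle equation therefore reads $[\![r,r]\!]=0$.

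The main obstacle is the PBW induction in the middle step: the shuffle coproduct couples different bidegrees of $r$, and one has to check carefully that the deep trigrades (all coordinates $\ge 1$) are reached only by the very top bidegree and that the two shuffle contributions cannot cancel one another inside $\lie g\tensor\lie g\tensor\lie g$. Once $r\in\Anti^2\lie g$ is in hand, the Yang--Baxter step is largely bookkeeping, driven by cocommutativity and the fact that all $F_2$-terms disappear under full antisymmetrization.
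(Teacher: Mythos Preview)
The paper does not supply a proof of this lemma; it is stated as a well-known fact and immediately followed by Drinfel'd's existence theorem. So there is nothing to compare your argument against here.

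Your outline is the standard textbook route (expand the twist axioms in powers of $\hbar$, read off the co-Hochschild cocycle condition at order~$\hbar$, and extract CYBE at order~$\hbar^2$), and the overall strategy is sound. Two places deserve tightening. First, your PBW induction for $r\in\Anti^2\lie g$ is only carried out in the baby case $(2,1)$; the general step (``these cannot match unless both vanish'') is asserted but not argued, and the shuffle coproduct does couple neighbouring bidegrees, so one really has to isolate the top trigrade carefully. A cleaner route is to note that the co-Hochschild $2$-cohomology of the cocommutative coalgebra $\mathcal U(\lie g)$ is exactly $\Anti^2\lie g$, and that an antisymmetric cocycle in $I\otimes I$ has no coboundary component, hence equals its class. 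Second, in the CYBE step the sentence ``the symmetric part of $F_1$ contributes only symmetrically, so one may replace $F_1$ by $\tfrac12 r$'' is not justified as written: you are multiplying two copies of $F_1$, and a product of a symmetric and an antisymmetric tensor is neither. The clean fix is to observe that the symmetric part $s=F_1-\tfrac12 r$ is a \emph{symmetric} $2$-cocycle, hence a coboundary $\delta T$, and then pass to the equivalent twist $\Delta(1+\hbar T)^{-1}\mathcal F((1+\hbar T)\otimes(1+\hbar T))$, which has first-order term exactly $\tfrac12 r$; equivalence preserves $r$, so this reduction is legitimate. With these two points addressed your proof goes through.
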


Let us now state the theorem of Drinfel'd connecting $r$-matrices with Drinfel'd twists. 

\begin{theorem}[Drinfel'd \cite{Drinfeld}]
Let $\lie g$ be a real finite dimensional Lie algebra and let $r$ be a $r$-matrix. Then there exists a Drinfel'd twist 
$\mathcal{F}\in \mathcal{U}(\lie g)\tensor \mathcal{U}(\lie g)[[\hbar]]$, such that 
	\begin{align*}
	r=F_1-F_1^{\mathrm{opp}}.
	\end{align*}
\end{theorem}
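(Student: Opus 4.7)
The plan is to obtain the twist via the equivalent datum of a left-invariant star product on the 1-connected Lie group $G$ integrating $\lie g$, and then to read off $\mathcal{F}$ from that star product.

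First, I would observe that the $r$-matrix produces a left-invariant Poisson bivector $\pi_r$ on $G$ by left-translating $r$ from the identity; the condition $[\![r,r]\!]=0$ is precisely the Jacobi identity for $\pi_r$. Any Drinfel'd twist with prescribed classical limit $r$ must quantize this Poisson structure in a left-invariant way, so the task reduces to constructing such a star product.

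Second, I would use the Gutt star product to build the required left-invariant star product. Recall that Gutt's construction endows $\mathcal{U}(\lie g)[[\hbar]]$ (equivalently, polynomials on $\lie g^*$ via the PBW symmetrization map) with a canonical associative product quantizing the Kirillov--Kostant--Souriau bracket. Through the identification of germs of functions on $G$ at the identity with $\mathcal{U}(\lie g)$, and then extending by left-invariance, this data gets combined with the $r$-matrix to produce a left-invariant star product $\star_r$ on $C^{\infty}(G)[[\hbar]]$ quantizing $\pi_r$.

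Third, under the standard bijection between left-invariant star products on $G$ and Drinfel'd twists on $\mathcal{U}(\lie g)$ used throughout the paper, the star product $\star_r$ corresponds to an element $\mathcal{F}\in \mathcal{U}(\lie g)\tensor\mathcal{U}(\lie g)[[\hbar]]$ which automatically satisfies the cocycle condition (i), counitality (ii), and normalization (iii) of Definition \ref{Def: Twist}. Computing the order-$\hbar$ coefficient and antisymmetrizing then yields $F_1-F_1^{\mathrm{opp}}=r$, since the antisymmetric part of the first-order bidifferential operator of a star product is its Poisson tensor.

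The main obstacle will be the second step: because $r$ is allowed to be degenerate, one cannot simply invoke a symplectic Fedosov construction on $G$, so the associativity of $\star_r$ must be engineered directly from the coassociativity of the coproduct of $\mathcal{U}(\lie g)$, suitably twisted by $r$. The elegance of Drinfel'd's original argument is precisely that he reduces this associativity back to the coassociativity already encoded in the Gutt product on $\mathcal{U}(\lie g)$.
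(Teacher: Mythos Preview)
The paper does not give a self-contained proof of this theorem; it attributes it to Drinfel'd and then, a few paragraphs later, \emph{recalls} the construction in the non-degenerate (symplectic) case. That recalled construction is, however, quite concrete, and your Step~2 deviates from it in a way that is a genuine gap rather than an alternative route.

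Drinfel'd's argument (as the paper summarizes it) does \emph{not} use the Gutt product on $\lie g^*$ directly. Instead one first passes to the central extension $\lie h=\lie g\oplus\mathbb{R}$ with bracket $[(X,x),(Y,y)]_{\lie h}=([X,Y],-\omega(X,Y))$, where $\omega=r^{-1}$. On $\lie h^*$ one takes the Gutt star product, shows it is tangential to the coadjoint orbit through the distinguished element $C\in\lie h^*$, and then observes that this orbit is locally diffeomorphic to $G$ via $g\mapsto\Phi_g(C)$. Pulling back along this local diffeomorphism yields the desired left-invariant star product on $G$ quantizing $\pi_r$. The central extension is the mechanism that converts the linear (KKS) Poisson geometry of a dual Lie algebra into the left-invariant symplectic geometry on $G$; without it there is no evident way to make the Gutt product ``see'' the $r$-matrix.

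Your Step~2, by contrast, invokes the Gutt product on $\mathcal{U}(\lie g)$ and then asserts that ``this data gets combined with the $r$-matrix to produce a left-invariant star product $\star_r$'' without saying how. The Gutt product on $\lie g^*$ quantizes the KKS bracket, not $\pi_r$, and the identification you mention (germs at $e$ with $\mathcal{U}(\lie g)$) is on the level of distributions/differential operators, not functions, so it does not by itself transport an associative product from $\lie g^*$ to $G$. The sentence in your final paragraph about ``reducing associativity back to the coassociativity already encoded in the Gutt product'' gestures at the right spirit, but the actual reduction goes through $\lie h$ and its coadjoint orbit, and that is precisely the step you are missing.
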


Note that Drinfel'd twists have a deep connection to deformation quantization, i.e. they induce a deformation quantization on 
any algebra the underlying Lie algebra acts on. 

\begin{theorem}
Let $\lie g$ be a Lie algebra acting as derivations on an algebra $\algebra{A}$ by $\phi\colon \lie g \to \Der(\algebra{A})$ 
and let $\mathcal{F}\in \mathcal{U}(\lie g)\tensor \mathcal{U}(\lie g)[[\hbar]]$ be a Drinfel'd twist. Then the extended 
action induced by $\phi$
	\begin{center}
	\begin{tikzcd}
	\mathcal{U}(\lie g) \arrow[r, " \Phi"] & \Diffop(\mathcal{A})
	\end{tikzcd}
	\end{center}
induces a formal deformation of the product of $\algebra{A}$ by 
	\begin{align*}
	a\star_\mathcal{F} b := \mu_\algebra{A}(\Phi\tensor\Phi(\mathcal{F})(a\tensor b))
	\end{align*}
for $a,b\in \algebra{A}[[\hbar]]$.
\end{theorem}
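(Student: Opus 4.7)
The plan is to unpack each defining property of a Drinfel'd twist and show that it translates into a corresponding property of the putative product $\star_\mathcal{F}$.

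First, I would record the key piece of input: since $\lie g$ acts on $\algebra A$ by derivations and $\Phi$ is an algebra homomorphism, the action is compatible with the coproduct in the sense that, for every $u \in \mathcal{U}(\lie g)$ and every $a, b \in \algebra A$,
\begin{align*}
\Phi(u)(\mu_\algebra{A}(a \tensor b)) = \mu_\algebra{A}\bigl((\Phi \tensor \Phi)(\Delta(u))(a \tensor b)\bigr).
\end{align*}
For $u \in \lie g$ this is precisely the Leibniz rule; the general case follows by induction on the PBW-degree of $u$, using that both sides are multiplicative in $u$ and that $\Delta$ is an algebra homomorphism. This upgrades $\algebra A$ to a $\mathcal{U}(\lie g)$-module algebra, which is the only conceptual ingredient needed.

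Armed with this compatibility, I would compute the two iterated products $(a \star_\mathcal{F} b) \star_\mathcal{F} c$ and $a \star_\mathcal{F} (b \star_\mathcal{F} c)$ by expanding the definitions and pushing the outer $\mathcal{F}$-action through the inner multiplication. Writing $\mu_3 \colon \algebra A^{\tensor 3} \to \algebra A$ for the (unambiguous) triple product, the expected outcome is
\begin{align*}
(a \star_\mathcal{F} b) \star_\mathcal{F} c &= \mu_3\bigl((\Phi \tensor \Phi \tensor \Phi)\bigl((\Delta \tensor \id)(\mathcal{F}) \cdot (\mathcal{F} \tensor 1)\bigr)(a \tensor b \tensor c)\bigr),\\
a \star_\mathcal{F} (b \star_\mathcal{F} c) &= \mu_3\bigl((\Phi \tensor \Phi \tensor \Phi)\bigl((\id \tensor \Delta)(\mathcal{F}) \cdot (1 \tensor \mathcal{F})\bigr)(a \tensor b \tensor c)\bigr),
\end{align*}
so that associativity of $\star_\mathcal{F}$ reduces directly to the cocycle identity of Definition \ref{Def: Twist}(1).

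Finally, condition (3) of Definition \ref{Def: Twist} yields $a \star_\mathcal{F} b = \mu_\algebra{A}(a \tensor b) + \mathcal{O}(\hbar^2)$, confirming that $\star_\mathcal{F}$ is a genuine formal deformation of $\mu_\algebra{A}$ in the sense of Gerstenhaber; and condition (2), combined with the observation $\Phi(u)(1_\algebra A) = \varepsilon(u) \cdot 1_\algebra A$ valid whenever $\algebra A$ is unital, ensures that $1_\algebra A$ remains a two-sided unit for $\star_\mathcal{F}$. I do not expect a real obstacle here: once the module-algebra compatibility is in place, the three twist axioms map one-for-one onto associativity, deformation, and unitality, and the remaining verification is a direct, if slightly tedious, tensor-calculus bookkeeping.
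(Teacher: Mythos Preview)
Your argument is correct and is exactly the standard one: once you have verified that $\algebra A$ is a $\mathcal{U}(\lie g)$-module algebra (your displayed compatibility of $\Phi$ with $\Delta$), the three twist axioms translate line by line into associativity, unitality, and the deformation property of $\star_\mathcal{F}$, just as you say.

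There is nothing to compare against here: the paper states this theorem without proof, treating it as a well-known background fact (it is, after all, the reason twists are interesting for deformation quantization, and the argument you give is the classical one found in any reference on Hopf-algebraic deformations). Your write-up would serve perfectly well as the omitted proof.
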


From now on, we focus on real Lie algebras where we can use the following 

\begin{corollary}\label{Cor: Eq Twist-Star}
Let $\lie g$ be a real Lie algebra, let $G$ be a Lie group with $\functor{Lie}(G)=\lie g$, then the following are equivalent:
	\begin{enumerate}
	\item a Drinfel'd twist $\mathcal{F}\in \mathcal{U}(\lie g)\tensor \mathcal{U}(\lie g)[[\hbar]]$ with corresponding 
	$r$-matrix $r$
	\item a left invariant star product $\star\colon \Cinfty(G)[[\hbar]]^{\tensor 2}\to \Cinfty(G)[[\hbar]]$
	with left-invariant Poisson structure $\pi\in \Secinfty(\Anti^2 TG)$, such that $\pi(e)=r$.
	\end{enumerate}
Moreover, two Drinfel'd twists are equivalent if and only if the left-invariant star products induced by them are invariantly 
equivalent. 
\end{corollary}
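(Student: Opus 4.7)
The plan is to exploit the classical identification between $\mathcal{U}(\lie g)$ and the algebra of left-invariant differential operators on $G$. Concretely, extending the infinitesimal action of $\lie g$ on $C^\infty(G)$ by left-invariant vector fields yields a morphism of Hopf algebras $\Phi\colon \mathcal{U}(\lie g)\to \Diffop(G)$ whose image is exactly the subalgebra of left-invariant differential operators, and by the Poincar\'e--Birkhoff--Witt theorem $\Phi$ is injective. Taking tensor squares and composing with pointwise multiplication $\mu$ gives a correspondence
\begin{equation*}
\Psi\colon \mathcal{U}(\lie g)\tensor \mathcal{U}(\lie g)[[\hbar]] \longrightarrow \bigl\{\text{left-invariant bidifferential operators on } G\bigr\}[[\hbar]],
\end{equation*}
and the goal is to show that $\Psi$ sends twists to left-invariant star products and respects the notions of equivalence on both sides.

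To establish (1)$\Rightarrow$(2) I would translate each condition of Definition \ref{Def: Twist} into a property of the bidifferential operator $B_\mathcal{F}(f,g):=\mu\circ (\Phi\tensor\Phi)(\mathcal{F})(f\tensor g)$. Under the correspondence the coproduct $\Delta$ is intertwined with precomposition by pointwise multiplication via the Leibniz rule, so the $2$-cocycle identity (1) translates to the associativity of $\star_\mathcal{F}$. The counital identity (2) is equivalent to $1\star_\mathcal{F} f=f\star_\mathcal{F} 1=f$, since $\varepsilon$ corresponds to evaluating a differential operator on the constant function $1$. The normalization (3) is equivalent to $B_\mathcal{F}=\mu+\mathcal{O}(\hbar)$, so that $\star_\mathcal{F}$ is a formal deformation of the pointwise product. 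Left-invariance is automatic because every element in the image of $\Phi$ commutes with left translations. For the converse (2)$\Rightarrow$(1), I would observe that any left-invariant bidifferential operator on $G$ is determined by its germ at the identity, which identifies the space of such operators with $\mathcal{U}(\lie g)\tensor \mathcal{U}(\lie g)$; running the dictionary backwards then produces a Drinfel'd twist from any left-invariant star product.

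For the classical limit, expanding $\mathcal{F}$ to first order and antisymmetrizing identifies the Poisson bivector $\pi$ of $\star_\mathcal{F}$ with the left-invariant extension of $r=F_1-F_1^{\mathrm{opp}}\in \Anti^2\lie g$, so that $\pi(e)=r$. For the equivalence clause, any $S\in \mathcal{U}(\lie g)[[\hbar]]$ with $S=\id+\mathcal{O}(\hbar)$ and $\varepsilon(S)=1$ is sent by $\Phi$ to a left-invariant formal differential operator $T$ with $T=\id+\mathcal{O}(\hbar)$ and $T(1)=1$, i.e.\ precisely the data of an invariant equivalence transformation. Unravelling the twist equivalence $\mathcal{F}'=\Delta(S)^{-1}\cdot\mathcal{F}\cdot(S\tensor S)$ under $\Psi$ then yields exactly $T(f\star_{\mathcal{F}'} g)=T(f)\star_\mathcal{F} T(g)$. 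The mildly technical step in the whole argument is checking that the algebraic $2$-cocycle identity really becomes associativity on the nose; this rests on the fact that $\Phi\tensor\Phi$ is a morphism of bialgebras and that $\mu$ is $\mathcal{U}(\lie g)$-equivariant with respect to the diagonal (coproduct) action. Once this is in place the remainder of the proof is essentially bookkeeping.
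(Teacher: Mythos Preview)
The paper does not actually supply a proof of this corollary: it is stated immediately after the theorem that a Drinfel'd twist induces a formal deformation on any algebra on which $\lie g$ acts by derivations, and is evidently meant to be read as that theorem specialised to the left action of $\lie g$ on $\Cinfty(G)$ by left-invariant vector fields. Your argument is precisely the standard way to unpack this, and it is correct: the identification of $\mathcal{U}(\lie g)$ with left-invariant differential operators via PBW, the translation of the $2$-cocycle/counit/normalisation conditions into associativity/unitality/deformation conditions, the inverse direction via the observation that left-invariant bidifferential operators are determined at $e$ and hence sit in $\mathcal{U}(\lie g)\tensor\mathcal{U}(\lie g)$, and the matching of the two notions of equivalence through $\Phi(S)$. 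So you have supplied exactly the proof the paper omits, along the same lines the paper implicitly has in mind; there is nothing to correct.
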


This allows us to forget about twists for the moment and take care about left-invariant star products on Lie groups and their 
equivalences. In the 
following we recall the construction of Drinfel'd \cite{Drinfeld},  which is based on the fact that there is a canonical star 
product on the  dual of a Lie algebra from \cite{Gutt}, which we in return also recall briefly. 
Let therefore $(\lie g,[-,-]) $
be a finite dimensional real 
Lie algebra, then there is a star product on $\lie g^*$ induced by the Poincaré-Birkoff-Witt isomorphism 
	\begin{align*}
	P_\hbar \colon \Sym^\bullet \lie g [[\hbar]] \to \mathcal{U}_\hbar(\lie g), 
	\end{align*}
where 
	\begin{align*}
	\mathcal{U}_\hbar (\lie g)=\frac{T^\bullet(\lie g)[[\hbar]]}{\langle x\tensor y -y\tensor x -\hbar[x,y]\rangle}.
	\end{align*}
and the Poincaré-Birkoff-Witt isomorphism is given by 
	\begin{align*}
	P_\hbar (X_1\vee\dots \vee X_k)=
	\frac{1}{k!}\sum_{\sigma\in S_k}X_{\sigma(1)}\bullet_\hbar \dots \bullet_\hbar X_{\sigma(k)}.
	\end{align*}
Here $\bullet_\hbar$ denotes the associative product in $ \mathcal{U}_\hbar(\lie g)$. 
We interpret $\Sym^\bullet \lie g$ as the polynomials on $\lie g^*$ and define a product 
	\begin{align*}
	f\star_G g =P_\hbar^{-1}(P_\hbar(f)\bullet_\hbar P_\hbar(g)).
	\end{align*}
It is easy to see that since the 
product is defined by differential operators, it can be extended to $\Cinfty(\lie g^*)$ and that this product is
 $\Ad$-invariant with respect to the 1-connected  Lie group $G$ integrating $\lie g$. We refer to this star product as the \emph{Gutt-star product}. Note that the semi-classical limit of this star product is the canonical (linear) Poisson structure on the dual of the Lie algebra. 

Let us now assume symplectic  Lie algebra $(\lie g, [-,-],\omega)$, i.e. a Lie algebra $(\lie g,[-,-])$ together with a non-
degenerate 2-form $\omega\in \Anti^2 \lie g^*$, such that $\DCE \omega=0$. We consider the central extension adapted to 
$\omega$ which is given by 
	\begin{align*}
	\lie h =\lie g \oplus \mathbb{R} \text{ with } [(X,x),(Y,y)]_\lie h=([X,Y],-\omega(X,Y)) 
	\end{align*}
and the canonical projection $ p\colon \lie h\to \lie g$, which is a lie algebra map.
	
A short computation shows that that the Jacobi identity of $[-,-]_\lie h$ is equivalent to $\DCE\omega=0$. 
Note that in $\lie h^*$, we have the distinguished element 
	\begin{align*}
	C\colon \lie h\ni (X,x)\to x\in \mathbb{R},
	\end{align*}
where we one can show that $\DCE C =-p^*\omega$. Note that $(\lie h, C)$ is a contact Lie algebra and is the \emph{contactification} of the symplectic Lie algebra $(\lie g,\omega)$, see \cite{contactstuff}.
Let us prove the following
\begin{lemma}\label{Lem: DimCoAdOrb}
Let $G$ be a Lie group with Lie algebra $\lie g$. Then 
	\begin{align*}
	\dim (\Ad_G^*(\alpha))=\rank(\DCE \alpha^\sharp).
	\end{align*}	 
\end{lemma}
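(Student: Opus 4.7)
The plan is to realise the coadjoint orbit as a homogeneous space and identify its tangent space at $\alpha$ with the image of the linear map $(\DCE \alpha)^\sharp\colon \lie g\to \lie g^*$. Since the rank of a linear map equals the dimension of its image, this would establish the claim. Throughout I view $\alpha\in\lie g^*$ as a Chevalley--Eilenberg $1$-cochain, so that $\DCE\alpha\in \Anti^2\lie g^*$, and I denote by $\alpha^\sharp$, or rather $(\DCE\alpha)^\sharp$, the induced map $\lie g\to \lie g^*$, $X\mapsto \DCE\alpha(X,-)$.

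First I would write $\Ad_G^*(\alpha)\cong G/G_\alpha$, where $G_\alpha$ is the stabiliser of $\alpha$ under the coadjoint action. The dimension of the orbit then equals $\dim\lie g-\dim\lie g_\alpha$, where $\lie g_\alpha=\functor{Lie}(G_\alpha)$. Differentiating the orbit map $G\to\Ad_G^*(\alpha)$, $g\mapsto \Ad_g^*(\alpha)$ at the identity yields the infinitesimal action $\lie g\to T_\alpha(\Ad_G^*(\alpha))$, $X\mapsto \ad_X^*\alpha$, whose kernel is precisely $\lie g_\alpha$ and whose image is the full tangent space of the orbit. Consequently $\dim(\Ad_G^*(\alpha))=\dim\mathrm{Im}(X\mapsto \ad_X^*\alpha)$.

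Next I would relate $\ad^*\alpha$ to $\DCE\alpha$. Unwinding definitions, for any $X,Y\in\lie g$,
\begin{align*}
	(\ad_X^*\alpha)(Y)=-\alpha([X,Y])=\DCE\alpha(X,Y)=\bigl((\DCE\alpha)^\sharp(X)\bigr)(Y),
\end{align*}
so the infinitesimal coadjoint action coincides (up to a harmless sign) with $(\DCE\alpha)^\sharp$ as a linear map $\lie g\to\lie g^*$. In particular both maps have the same image and therefore the same rank.

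Combining these two steps gives
\begin{align*}
	\dim(\Ad_G^*(\alpha))=\dim\mathrm{Im}(\ad^*_{\bullet}\alpha)=\rank\bigl((\DCE\alpha)^\sharp\bigr),
\end{align*}
which is exactly the desired equality. There is no serious obstacle here: the only care needed is to match sign conventions for $\DCE$ on $1$-cochains with those used elsewhere in the note, and to check that the infinitesimal coadjoint orbit map is indeed surjective onto $T_\alpha(\Ad_G^*(\alpha))$, which follows from the standard fact that the orbit of a smooth Lie group action on a manifold is an immersed submanifold whose tangent space at any point is the image of the fundamental vector field map.
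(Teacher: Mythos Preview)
Your proof is correct and follows essentially the same route as the paper: identify the tangent space of the coadjoint orbit at $\alpha$ with the image of the fundamental vector field map $X\mapsto \ad_X^*\alpha$, and then observe that this map coincides (up to sign) with $(\DCE\alpha)^\sharp$. The only difference is that you phrase the first step via the homogeneous space description $G/G_\alpha$ and the rank--nullity viewpoint, whereas the paper simply invokes that the tangent space of an orbit is spanned by fundamental vector fields; these are the same observation.
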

\begin{proof}
It is enough to prove that the tangent space of the coadjoint orbit at $\alpha$ has dimension $\rank(\DCE \alpha^\sharp)$. 
But since it is an orbit of a Lie group action it is spanned by the fundamental vector fields. But they are given by 
	\begin{align*}
	X_\lie g(\alpha)=\frac{\D}{\D t}\At{t=0}\Ad^*_{\exp(tX)}\alpha=\frac{\D}{\D t}\At{t=0} \exp(t\ad^*_X)\alpha.
	\end{align*}
which is $0$ if and only if $\ad_X^*\alpha=0$. Moreover, we can write $\ad_X^*\alpha=-\DCE\alpha^\sharp(X)$ and the claim is 
proven.  
\end{proof}

We have additionally a Lie algebra action of $\lie g$ on $\lie h^*$ by
	\begin{align*}
	\acts \colon \lie g \ni X\to (X,0)_{\lie h^*}\in \Secinfty(T\lie h^*)
	\end{align*}
where $(X,0)_{\lie h^*}$ is the fundamental vector field of the coadjoint representation of $H$ (the 1-connected Lie group 
integrating $\lie h$). It is easy to check that 
	\begin{align*}
	[X,Y]\acts=-[X\acts,Y\acts]
	\end{align*}
and hence it is in fact a Lie algebra action. Moreover its fundamental vector fields have complete flow and hence this Lie 
algebra action lifts to a left Lie group action $\Phi\colon G\times \lie h^*\to \lie h^*$ of $G$ on $\lie h^*$ by the Theorem 
of Palais. Note that, we can lift every map in order to obtain the following commutative diagram 
	\begin{center}
	\begin{tikzcd}\label{Diag: Liegrp}
	H \arrow[r, "\Ad^* "]\arrow[d, two heads]& \group{Gl}(\lie h^*)\\
	G\arrow[ur, "\Phi "']
	\end{tikzcd}
	\end{center}
which is just the integration of the diagram 
	\begin{center}
	\begin{tikzcd}\label{Diag: Liealg}
	\lie h \arrow[r, "\ad^* "]\arrow[d,"p", two heads]& \End(\lie h^*)\\
	\lie g\arrow[ur,]
	\end{tikzcd}
	.
	\end{center}

\begin{lemma}\label{Lem: DimOrb}
The orbit $\Phi_G(C)$ for the previously defined $C\in \lie h^*$ has dimension $\dim(\Phi_G(C))=\dim (\lie g)$.   
\end{lemma}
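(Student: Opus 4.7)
The plan is to compute $\dim \Phi_G(C)$ by determining the dimension of the tangent space $T_C \Phi_G(C)$, since the orbit is homogeneous under $G$ and hence of constant dimension. This tangent space is spanned by the fundamental vector fields $X \acts$ evaluated at $C$, so the task reduces to showing that the linear map $\lie g \to \lie h^*$, $X \mapsto X \acts(C)$, is injective.

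First, I would compute $X \acts(C)$ directly. Since $\acts$ is by definition the restriction of the coadjoint representation of $H$ to elements of the form $(X,0)$, the standard formula for fundamental vector fields of a coadjoint action gives $X \acts(C) = \ad^*_{(X,0)} C \in \lie h^*$. Evaluating on $(Y,y) \in \lie h$ using the bracket $[(X,0),(Y,y)]_\lie h = ([X,Y], -\omega(X,Y))$ together with $C(Y,y) = y$ yields, up to a sign depending on convention,
\begin{align*}
\bigl(\ad^*_{(X,0)} C\bigr)(Y,y) = \pm\,\omega(X,Y).
\end{align*}
Thus $X \acts(C) = \pm\, p^*(\iota_X \omega)$, and since $\omega$ is non-degenerate on $\lie g$, the assignment $X \mapsto X \acts(C)$ is injective. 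I would then conclude that $\dim T_C \Phi_G(C) = \dim \lie g$, and hence the orbit itself has dimension $\dim \lie g$.

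An alternative, more conceptual route would be to observe that $(0,1) \in \lie h$ is central (a direct consequence of the definition of $[-,-]_\lie h$), so the coadjoint action of $H$ on $\lie h^*$ descends to a $G$-action which necessarily coincides with $\Phi$ by the diagram following Lemma \ref{Lem: DimCoAdOrb}. Hence $\Phi_G(C) = \Ad^*_H(C)$, and Lemma \ref{Lem: DimCoAdOrb} applied to the Lie group $H$ gives $\dim \Phi_G(C) = \rank(\DCE C) = \rank(-p^*\omega) = \dim \lie g$, using that $\ker p$ is one-dimensional and $\omega$ is non-degenerate. There is no serious obstacle in either approach: the statement is essentially a repackaging of the non-degeneracy of $\omega$ on $\lie g$ as a statement about the dimension of the $G$-orbit through $C$.
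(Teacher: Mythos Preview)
Both of your approaches are correct. Your alternative route is exactly the paper's argument: it invokes Lemma~\ref{Lem: DimCoAdOrb} for the group $H$, uses $\DCE C=-p^*\omega$ together with the non-degeneracy of $\omega$ to obtain $\rank(\DCE C)=\dim\lie g$, and then appeals to the commutative diagram to identify the $H$-coadjoint orbit with $\Phi_G(C)$.

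Your first approach is a slightly different, more hands-on route: instead of citing Lemma~\ref{Lem: DimCoAdOrb}, you compute the tangent space $T_C\Phi_G(C)$ directly by evaluating the fundamental vector fields at $C$ and showing the resulting linear map $X\mapsto \pm\,p^*(\iota_X\omega)$ is injective. This is really just the content of Lemma~\ref{Lem: DimCoAdOrb} unfolded in this particular case, so the gain is self-containment at the cost of repeating a computation already packaged in that lemma. Either argument is perfectly acceptable.
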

\begin{proof}
Using Lemma \ref{Lem: DimCoAdOrb} and the fact that $\DCE C=-p^*\omega $ and that $\omega$ is symplectic, we conclude that 
the coadjoint orbit through $C$ has dimension $\dim(\lie g)$. By the diagram \ref{Diag: Liegrp}, we get hence the claim.
\end{proof}

\begin{remark}\label{Rem: Sympl}
The previous Lemma shows that $\Phi_C\colon G\ni g\mapsto \Phi_g(C) \in \Phi_G(C)$ is a local diffeomorphism. 
Moreover, we have that $\Phi_C\circ \ell_g =\Phi_g\circ \Phi_C$. Using the fact that $\Phi_G(C)$ is a coadjoint orbit of the 
group $H$, we know that it is a symplectic leaf of its canonical Poisson structure with invariant symplectic structure $
\Theta\in \Secinfty(\Anti^2T^* \Phi_G(C))$ and hence $\Phi_C^*\Theta$ is a left-invariant symplectic structure on $G$. A tiny 
computation shows that $\Phi_C^*\Theta(e)=\omega$ and hence $\Phi_C^*\Omega$ is the left translated 2-form obtained by 
$\omega$. Let us denote from now on $\Pi$ the Poisson structure induced by $\Phi_C^*\Theta$.
\end{remark}

Let us now apply these considerations to the construction of the Gutt-star product. First of all we have 

\begin{proposition}
The Gutt-star product is invariant with respect to the action $\Phi\colon G\times \lie h^*\to \lie h^*$.
\end{proposition}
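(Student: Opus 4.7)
The plan is to deduce the $\Phi$-invariance from the $\Ad^*$-invariance of the Gutt star product on $\lie h^*$ with respect to the 1-connected Lie group $H$ integrating $\lie h$, which was already noted at the moment the Gutt product was introduced. Since $\star_G$ on $\lie h^*$ is built from the Lie algebra $\lie h$ alone, it is $\Ad^*_H$-invariant essentially by construction; the content of the proposition is that this invariance descends along the surjection $p\colon H\to G$ to the action $\Phi$.

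First I would check that $p\colon H\to G$ is surjective at the group level. The Lie algebra morphism $p\colon \lie h \to \lie g$ is surjective; since $H$ is 1-connected it integrates to a Lie group morphism $p\colon H\to G$, whose image is an open, hence (by connectedness of $G$) all, subgroup of $G$. Therefore every $g\in G$ admits a lift $h\in H$ with $p(h)=g$.

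Second, I would read off from the commutative diagram (\ref{Diag: Liegrp}) that for any such lift one has $\Phi_g=\Ad^*_h$ as diffeomorphisms of $\lie h^*$. Consequently, for any $f_1,f_2\in \Cinfty(\lie h^*)[[\hbar]]$,
\begin{align*}
\Phi_g^*(f_1\star_G f_2)
=(\Ad^*_h)^*(f_1\star_G f_2)
=(\Ad^*_h)^* f_1\star_G (\Ad^*_h)^* f_2
=\Phi_g^* f_1\star_G \Phi_g^* f_2,
\end{align*}
where the middle equality is the already established $\Ad^*_H$-invariance of $\star_G$.

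There is really no hard step here: the whole argument is a diagram chase that reduces the desired $G$-equivariance to the previously recorded $H$-equivariance via the surjection $H\twoheadrightarrow G$. The only point requiring minor attention is the surjectivity of the integrated projection, which uses 1-connectedness of $H$ together with connectedness of $G$; everything else is formal.
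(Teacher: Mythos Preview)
Your argument is correct. The paper states this proposition without proof; your reduction to the already-recorded $\Ad^*_H$-invariance of the Gutt product via the commutative diagram $H\twoheadrightarrow G\to \group{Gl}(\lie h^*)$ is precisely the intended (and essentially only) justification, and your check that $p\colon H\to G$ is surjective fills in the one point the paper leaves implicit in the two-headed arrow.
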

Moreover, we have

\begin{lemma}
The Gutt-star product of $\lie h$ is tangential to $\Phi_G(C)$. 
\end{lemma}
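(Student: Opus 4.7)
The plan is to identify the orbit $\Phi_G(C)$ with (an open subset of) a Casimir level set of the canonical Poisson structure on $\lie h^*$, and then to exploit the PBW formula defining $\star_G$ in order to reduce tangentiality to the centrality of a single element of $\lie h$. Observe first that $E:=(0,1)\in\lie h$ is central, since $[(X,x),(0,1)]_\lie h=([X,0],-\omega(X,0))=0$. Viewed as a linear polynomial $\hat E\in\Sym^1\lie h\subseteq\Cinfty(\lie h^*)$, this centrality translates to $\hat E$ being a Casimir of the linear Poisson bracket on $\lie h^*$, so that $\hat E$ is constant along coadjoint $H$-orbits. The diagram preceding Lemma~\ref{Lem: DimOrb} identifies $\Phi_G(C)$ with the coadjoint $H$-orbit through $C$, and $\hat E(C)=C(0,1)=1$, so $\Phi_G(C)\subseteq N:=\{\alpha\in\lie h^*\mid\alpha(E)=1\}$. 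By Lemma~\ref{Lem: DimOrb}, $\dim\Phi_G(C)=\dim\lie g=\dim\lie h-1=\dim N$, so $\Phi_G(C)$ is an open subset of the affine hyperplane $N$, and it suffices to prove that $\star_G$ is tangential to $N$.

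The heart of the argument is to show that $\hat E\star_G f=\hat E\cdot f=f\star_G\hat E$ for every $f\in\Cinfty(\lie h^*)[[\hbar]]$. Since $\star_G$ is bidifferential in each argument, it is enough to verify this on polynomials $f=X_1\vee\cdots\vee X_k$. Centrality of $E$ in $\mathcal{U}_\hbar(\lie h)$ permits moving the factor $E$ to the leftmost slot of every summand in the symmetrised sum defining $P_\hbar(E\vee X_1\vee\cdots\vee X_k)$; the $(k+1)$ equivalent positions for $E$ cancel the normalisation $1/(k+1)!$ against $1/k!$, yielding
\[
P_\hbar(E\vee X_1\vee\cdots\vee X_k)=E\bullet_\hbar P_\hbar(X_1\vee\cdots\vee X_k).
\]
Applying $P_\hbar^{-1}$ to both sides gives $\hat E\star_G f=\hat E\cdot f$ on polynomials, and symmetrically on the right by an analogous computation.

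Consequently the vanishing ideal of $N$, which is the principal ideal globally generated by $\hat E-1$, is a two-sided ideal for $\star_G$: both $(\hat E-1)\star_G f=(\hat E-1)\cdot f$ and $f\star_G(\hat E-1)=f\cdot(\hat E-1)$ vanish on $N$. This is precisely the statement that the bidifferential cochains of $\star_G$ are tangent to $N$, and hence $\star_G$ is tangential to the open subset $\Phi_G(C)\subseteq N$. The only non-trivial ingredient is the PBW identity of the second paragraph; all other steps reduce to the dimension count of Lemma~\ref{Lem: DimOrb} and the elementary centrality of $E$.
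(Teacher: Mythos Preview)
Your argument is correct and follows essentially the same route as the paper: identify $\Phi_G(C)$ as an open subset of the affine hyperplane $N=D=\{\hat E=1\}$ via the dimension count of Lemma~\ref{Lem: DimOrb}, use centrality of $E=(0,1)\in\lie h$ to obtain $\hat E\star_G f=\hat E\cdot f=f\star_G\hat E$, and conclude that the vanishing ideal of $N$ is a two-sided $\star_G$-ideal. You justify the key identity via the PBW map in more detail than the paper, which simply asserts it.

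One small expository gap: in your last paragraph you only check that $(\hat E-1)\star_G f$ and $f\star_G(\hat E-1)$ vanish on $N$, i.e.\ you treat only the generator. To pass to an arbitrary element $h=(\hat E-1)\cdot g$ of the vanishing ideal you need one more line of associativity, which the paper writes out explicitly: your identity gives $h=(\hat E-1)\star_G g$, hence $h\star_G f=(\hat E-1)\star_G(g\star_G f)=(\hat E-1)\cdot(g\star_G f)$, which vanishes on $N$; the right-sided case is symmetric.
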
 

\begin{proof}
We have to check that for functions $f,g\in \Cinfty(\lie h^*)$, such that $f\at{\Phi_G(C)}=0$ we have 
$f\star_G g\at{\Phi_G(C)}=0$. But we actually show a bit more: we want to show that the Gutt-star product is tangential to 
the submanifold 
	\begin{align*}
	D=\{\alpha\in \lie h^*\ | \ \alpha((0,1))-1=0\}\supseteq_{\mathrm{open}} \Phi_G(C),
	\end{align*}
where the inclusion of $\Phi_G(C)$ in $D$ follows from the fact that $\Ad_h^*C(0,1)=C(\Ad_h(0,1))=C(0,1)=1$  and the fact that it is open follows from dimensional reasons (see Lemma \ref{Lem: DimOrb}).
Note that this is the zero set of the polynomial $p=(0,1)-1\in \lie h\oplus\mathbb{R}\in \Sym^\bullet\lie h$ and hence every 
function $f\in \Cinfty(\lie h^*)$ vanishing on $D$ is given by $f=\tilde{f}p$. Moreover, by the definition of the Gutt-star 
product we have that 
	\begin{align*}
	Y\star_G f=Y\cdot f
	\end{align*}
for a central element $Y\in Z(\lie h^*)$ seen as a polynomial. Hence, we have
	\begin{align*}
	(pf)\star_G g=((0,1)f)\star_G g- f\star_G g=p(f\star_G g)
	\end{align*}	 
and hence $(pf)\star_G g\at{D}=0$, since $(0,1)$ is central. And the claim follows. 
\end{proof}

\section{Kontsevich star product on $\lie h^*$}\label{Sec: KontGutt}

The dual of the Lie algebra $\lie h^*$ can be alternatively seen as a Poisson manifold and apply the the Kontsevich 
map to obtain a star product. More, precisely we apply the Kontsevich map to $\hbar \pi$, seen as a Maurer-Cartan element, 
which is invariant with respect to the adjoint representation of $H$ to obtain a $H$-invariant star product $\star_K$. In 
\cite{Dito1999} it was shown that the two star product are invariantly equivalent, i.e. there is a series of invariant 
differential operators $S=\id+\mathcal{O}(\hbar)\in \Diffop(\lie h^*)^H$, such that
	\begin{align*}
	S(f\star_G h)=(Sf)\star_K (S h),
	\end{align*}	 
which basically follows by the universal property of the universal enveloping algebra.
Moreover, we have 

\begin{lemma}
The Kontsevich star product obtained on the dual Lie algebra $\lie h^*$ is tangential to $\Phi_G(C)$ and so is $S$. 
\end{lemma}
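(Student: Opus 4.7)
The plan is to use the invariant equivalence $S$ between $\star_G$ and $\star_K$ to transfer the tangentiality just proved for $\star_G$. Since $\Phi_G(C)$ is open in the affine hyperplane $D = \{\alpha \in \lie h^* : \alpha((0,1)) = 1\}$, the zero set of $p = (0,1) - 1 \in \Sym^\bullet \lie h$, it suffices to prove tangentiality to $D$, whose vanishing ideal $\mathcal{I}_D$ is the principal ideal generated by $p$.

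For $\star_K$, the argument from the previous lemma applies verbatim once we verify that $(0,1)$ remains on-the-nose central: $(0,1) \star_K f = (0,1) f = f \star_K (0,1)$. For Kontsevich's star product on the linear Poisson manifold $\lie h^*$ this can be checked graph by graph. Any edge of a Kontsevich graph ending at the leaf $(0,1)$ forces an adjacent bivector vertex to carry a structure constant of the form $c_{(0,1),\cdot}^{\cdot}$, which vanishes by centrality of $(0,1)$ in $\lie h$. The remaining graphs, where no edge points to $(0,1)$, contribute $(0,1)$ times the same sum that arises when computing $1 \star_K f$, and therefore cancel at every nontrivial order in $\hbar$ by the unitality identity $1 \star_K f = f$. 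Hence $p \star_K f = p \cdot f$, and the factorization $(pf) \star_K g = p \cdot (f \star_K g)$ from the previous lemma gives the desired tangentiality.

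For $S$, the intertwining $S(p \star_G f) = Sp \star_K Sf$, together with the centrality of $p$ for $\star_G$, yields $S(pf) = Sp \star_K Sf$. It therefore suffices to prove the rigidity statement $Sp = p$: combined with centrality of $p$ for $\star_K$, this delivers $S(pf) = p \cdot Sf \in \mathcal{I}_D$ and hence tangentiality of $S$. The identity $Sp = p$ should follow from three observations: (i) $Sp$ must be a $\star_K$-Casimir, since $S$ is an algebra isomorphism and $p$ is a $\star_G$-Casimir; (ii) $Sp = p + \mathcal{O}(\hbar)$ with each correction an $H$-invariant polynomial, since $S$ is an $H$-invariant formal power series starting with the identity; and (iii) the explicit PBW-based construction of $S$ given in \cite{Dito1999}, which is designed precisely to intertwine the identification of linear Casimirs on $\lie h^*$ with the center of $\mathcal{U}_\hbar(\lie h)$.

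The main obstacle is exactly this rigidity $Sp = p$. Even granted $\star_K$-centrality and $H$-invariance, $Sp$ could a priori differ from $p$ by a higher-order $H$-invariant Casimir on the linear Poisson manifold $\lie h^*$, so ruling out such corrections requires either a direct inspection of Dito's construction or a normalization procedure that modifies $S$ within its invariant equivalence class to one manifestly fixing $p$, without altering the equivalence class of the induced star product.
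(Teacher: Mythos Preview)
Your argument for the tangentiality of $\star_K$ is correct and is essentially a repackaging of the paper's observation. The paper chooses coordinates $(x_1,\dots,x_n,u)$ with $D=\{u=1\}$, notes that
\[
\pi=\tfrac12\,x_k\,C^k_{ij}\,\partial_{x_i}\wedge\partial_{x_j}-\tfrac12\,u\,\omega_{ij}\,\partial_{x_i}\wedge\partial_{x_j}
\]
contains no $\partial_u$, and concludes that none of Kontsevich's bidifferential operators $B_k(\pi)$ differentiate in the $u$-direction. Your graph argument (an edge into the leaf $(0,1)$ forces a structure constant $c^{\,\cdot}_{(0,1),\cdot}=0$) and the unitality trick for the remaining graphs encode exactly this fact, and then the factorisation $(pf)\star_K g=p\,(f\star_K g)$ goes through just as for $\star_G$.

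For $S$, however, there is a genuine gap, which you yourself flag: you reduce to $Sp=p$ (or at least $Sp\in\mathcal I_D$) but cannot close it from the abstract properties (i)--(ii); an arbitrary $H$-invariant Casimir need not lie in $\mathcal I_D$. The paper does not attempt any such rigidity argument. Instead it reads off tangentiality directly from Dito's explicit formula: $S$ is a series in the constant-coefficient operators
\[
D_r=\sum_{i_1,\dots,i_r}\operatorname{Tr}\big(\ad_{e_{i_1}}\cdots\ad_{e_{i_r}}\big)\,\partial_{y_{i_1}}\cdots\partial_{y_{i_r}},
\]
and since $e_{n+1}=(0,1)$ is central, every summand with some $i_j=n+1$ vanishes, so no $\partial_u$ ever occurs. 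This is precisely your option ``direct inspection of Dito's construction'', and it is a one-line check; as a by-product it also yields $Sp=p$ (the only surviving term acting on $u-1$ is the identity). So the missing step in your proposal is exactly the one the paper supplies, and once you invoke Dito's formula the detour through Casimir rigidity becomes unnecessary.
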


\begin{proof}
The proof of both statements works exactly the same. It is based on the following observation: we can choose coordinates
$\{x_1,\dots,x_n,u\}$, where $\{x_i\}$ are coordinates in $\lie g^*$ and $u$ is the remaining coordinate, when we identify 
$\lie h^*=\lie g^*\oplus \mathbb{R}$. In this coordinates we have that the canonical Poisson structure $\pi$ is given by
	\begin{align*}
	\pi=\frac{1}{2}x_k \cdot C^k_{ij} \frac{\partial}{\partial x_i}\wedge  \frac{\partial}{\partial x_j}-
	\frac{1}{2}u\cdot \omega_{ij}\frac{\partial}{\partial x_i}\wedge  \frac{\partial}{\partial x_j},
	\end{align*}
where $C^k_{ij}$ are the structure constants of $\lie g$ and $\omega_{ij}$ are the components of the symplectic form 
$\omega$.
Moreover we have for the hyperplane $D=\{u=1\}$. A tangential differential operator has to be of the form 
	\begin{align*}
	\mathfrak{D}=\sum_{(I,k)}D_{(I,k)}(x,u)\frac{\partial^{|I|}}{\partial x^I}\bigg(\frac{\partial}{\partial u}\bigg)^k 
	\end{align*}
such that $D_{(I,k)}(x,1)=0$ if $k\neq 0$. It is easy to see that for our specific bivector field $\pi$ the Bidifferential 
operators $B_k(\pi)$ constructed from Kontsevich's formality map do not contain any derivatives in $u$-direction, since the 
Poisson bivector does not possess any and hence they are canonically tangential. 
Let us now turn towards the equivalence $S$, due to \cite[Thm 3]{Dito1999} this eqivalence is build up out of differential 
operators of the form (adapted to our coordinates)
	\begin{align*}
	D_r=\sum_{i_1,\dots, i_r=1}^{\dim\lie g+1} \mathrm{Tr}(\ad_{e_{i_1}}\dots\ad_{e_{i_r}})\frac{\partial}{\partial y_{i_1}}
	\cdots \frac{\partial}{\partial y_{i_r}}
	\end{align*}	
where $y_{i}=x_{i}$ for $1\leq i\leq \dim \lie g$ and $y_{\dim\lie g+1}=u$. and and the $e_i$'s are the corresponding basis 
vectors in $\lie h^*$. Note that $\ad_{e_{\dim \lie g+1}}=0$ and hence also in these differential operators no derivatives in 
the direction of $u$ appear. This concludes the proof. 
\end{proof}

If a star product is tangential to a submanifold it can be shrinked to it and one can obtain again a star product on it. But 
we have moreover that in our case:

\begin{corollary}
The star product obtained by shrinking the Kontsevich star product to $\Phi_G(C)$ is the same as the Kontsevich star product
constructed by the shrinked Poisson bivector $\pi\at{\Phi_G(C)}$ directly on $\Phi_G(C)$. 
\end{corollary}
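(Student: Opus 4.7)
The plan is to exploit the explicit shape of $\pi$ in the coordinates $(x_1,\ldots,x_n,u)$ introduced in the previous lemma, together with the graphical structure of Kontsevich's formula. In those coordinates
\begin{equation*}
\pi \;=\; \tfrac{1}{2}\bigl(x_k C^k_{ij} - u\,\omega_{ij}\bigr)\,\partial_{x_i}\wedge\partial_{x_j},
\end{equation*}
so every wedge factor of $\pi$ points in an $x$-direction; since $\Phi_G(C)$ is open in the hyperplane $D=\{u=1\}$, the restricted bivector $\pi|_{\Phi_G(C)}$ is well-defined, and both candidate star products live on the same manifold.

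Next I would write Kontsevich's star product in its graph expansion
\begin{equation*}
f\star_K g \;=\; \sum_{n\ge 0}\hbar^n \sum_{\Gamma}\, w_\Gamma\,B_{\Gamma,\pi}(f,g),
\end{equation*}
and observe that in each bidifferential operator $B_{\Gamma,\pi}$ every partial derivative that appears is contributed by a directed edge emanating from a bivector vertex. By the shape of $\pi$ every such edge is a $\partial_{x_i}$, never a $\partial_u$. This is precisely the combinatorial observation already made in the previous lemma, now reinterpreted: $\partial_u$'s not only never reach $f$ and $g$, they never reach the coefficients $\tfrac{1}{2}(x_k C^k_{ij} - u\,\omega_{ij})$ of $\pi$ either.

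From this the statement follows formally. For every graph $\Gamma$ one has
\begin{equation*}
B_{\Gamma,\pi}(f,g)\big|_{u=1} \;=\; B_{\Gamma,\;\pi|_{u=1}}\bigl(f|_{u=1},\,g|_{u=1}\bigr),
\end{equation*}
because the only operations performed on the coefficients of $\pi$ are $\partial_{x_i}$-derivatives, and these commute with the substitution $u=1$. Summing over $n$ and $\Gamma$ yields the desired identity of star products on $D$, and restricting further to the open subset $\Phi_G(C)\subseteq D$ finishes the argument. The only point requiring care is the observation in the second paragraph --- that every derivative in $B_{\Gamma,\pi}$ is a $\partial_{x_i}$ --- but this is the same fact that drives the preceding tangentiality lemma, so no new ingredient is needed and I would simply invoke that argument rather than repeat it.
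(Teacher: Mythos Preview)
Your proposal is correct and follows the same line of reasoning the paper intends: the corollary is stated without proof in the paper, as an immediate consequence of the observation in the preceding lemma that no $\partial_u$-derivatives occur in the Kontsevich bidifferential operators. You have simply made explicit the step that restricting $u=1$ commutes graph-by-graph with the Kontsevich formula, which is exactly the content the paper leaves to the reader.
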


Since the Gutt-star product, the Kontsevich-star product and the equivalence bewtween them are invariant under the action of 
$G$, we can pull the back via the local diffeomorphism in order to obtain two left-invariant star products on $G$ and a left-
invariant equivalence between them. 

Moreover, one can show that pull-back of the Kontsevich star product is oobtained by Dolgushev globalization of the 
Kontsevich formality with taking pull-back the canonical flat connection on $\lie h^*$ shrinked to $D$, which is invariant 
under the 
group action of $H$, since it acts by linear transformations. Moreover, since everything is left-invariant we see that the 
invariant Kontsevich class of the Kontsevich star product, and hence the Gutt--star product, is 
$[\hbar \Pi]_G$. The aim is now to compute the invariant Fedosov class of this star product, which is 
an element in  $\frac{[\Phi_C^*\Theta]}{\hbar}+ H_{dR}^2(G)^G[[\hbar]]
=\frac{[\omega]}{\hbar}+H_{\mathrm{CE}}^2(\lie g)[[\hbar]]$, where we identify the left-invariant de Rham cohomology with the Chevalley-Eilenberg cohomology and the fact that the invariant symplectic structure $\Phi_C^*\Theta$ is just $\omega$ under this identification by Remark \ref{Rem: Sympl}.    

\section{The Fedosov-class of the Drinfel'd twist}

A Drinfel'd Twist obtained by a symplectic structure $\omega\in \Anti^2\lie g^*$ is a left-invariant star product on a Lie 
group integrating the Lie algebra $\lie g$. The original Drinfel'd construction of a twist, is basically described above: the 
Drinfel'd twist is 
the "pulled-back" Gutt-star product.  
In \cite{me} it is shown that it is possible to modify the Fedosov-construction in 
order to get a full classification of symplectic Drinfel'd twists and the moduli space of symplectic Drinfel'd twists with 
$r$-matrix $r=\omega^{-1}$ is given by
\begin{align*}
\frac{[\omega]}{\hbar}+H_{CE}^2(\lie g)[[\hbar]].
\end{align*}
Moreover, it is easy to see that the left invariant star product $\star$ induced by a twist $\mathcal{F}$ from Corollary \ref{Cor: Eq Twist-Star} of characteristic class  $\frac{[\omega]}{\hbar}+\Omega$ has invariant characteristic class 
$I(\frac{[\omega]}{\hbar}+\Omega)$, where $I\colon \Anti^\bullet \lie g^*\to \Secinfty(\Anti^\bullet T^*G)^G$ is the canonical isomorphism of the Chevalley-Eilenberg complex to the left-invaraint de Rham complex.

\begin{theorem}
Let $(\lie g,\omega)$ be a symplectic Lie algebra and let $\mathcal{F}$ be the Drinfel'd twist constructed above. Then its characteristic class is
$\frac{[\omega]}{\hbar}$.
\end{theorem}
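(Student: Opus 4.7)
The plan is to chase the characteristic class through the chain of equivalences recalled in Sections~1 and~2 and then invoke the comparison between the invariant Kontsevich and Fedosov classifications in the symplectic case. By Corollary~\ref{Cor: Eq Twist-Star}, the Drinfel'd twist $\mathcal{F}$ corresponds to a left-invariant star product $\star_\mathcal{F}$ on the $1$-connected Lie group $G$ whose semi-classical limit is the left-invariant Poisson structure $\Pi$ of Remark~\ref{Rem: Sympl}. The preceding sections identify $\star_\mathcal{F}$, up to invariant equivalence, with the $\Phi_C$-pull-back of the shrunk Gutt-star product on the coadjoint orbit $\Phi_G(C)$, and by \cite{Dito1999} and \cite{Stefan} this is in turn invariantly equivalent to the $\Phi_C$-pull-back of the shrunk Kontsevich star product $\star_K$.

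Next I would identify the invariant Kontsevich class of $\star_\mathcal{F}$. As argued at the end of Section~\ref{Sec: KontGutt}, the pull-back of $\star_K$ coincides with Dolgushev's globalization of the Kontsevich formality applied to the Maurer--Cartan element $\hbar\Pi$, computed with the flat, $G$-invariant connection inherited from the linear structure of $\lie h^*$ restricted to $D$. Since the connection is flat and the Poisson tensor receives no higher-order quantum corrections, the invariant Kontsevich class is \emph{exactly} $[\hbar \Pi]_G$, with no $\hbar$-corrections of order $\geq 2$.

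It then remains to transport this statement to the Fedosov side. Here I would invoke the comparison between the two classifications of symplectic star products (the invariant analogue of the equivalence of \cite{Kontsevich2003} and \cite{Fedosov1994}, in the setting of \cite{D2005,BBG1998}): a formal Poisson bivector of the form $\hbar\Pi + \mathcal{O}(\hbar^2)$ whose classical part is the inverse of the symplectic form $\Phi_C^*\Theta$ corresponds to the invariant Fedosov class $\frac{[\Phi_C^*\Theta]}{\hbar} + \mathcal{O}(\hbar)$, and the quantum corrections on the two sides match. Since our Kontsevich class has no quantum corrections, the Fedosov class reduces to $\frac{[\Phi_C^*\Theta]}{\hbar}$. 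Applying the isomorphism $I\colon \Anti^\bullet \lie g^*\to \Secinfty(\Anti^\bullet T^*G)^G$ together with the identification $\Phi_C^*\Theta=I(\omega)$ from Remark~\ref{Rem: Sympl}, and the fact from \cite{me} recalled just before the theorem that the characteristic class of a twist is the invariant Fedosov class of its induced left-invariant star product, one concludes that the characteristic class of $\mathcal{F}$ is $\frac{[\omega]}{\hbar}$.

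The main obstacle I expect is making step three rigorous in the invariant setting: namely, the compatibility of the Kontsevich and Fedosov classifications is classical for plain symplectic star products, but to deduce the invariant version one has to know that the intertwining equivalence between a Kontsevich and a Fedosov normal form can be chosen $G$-invariant when both star products are. This should follow from a standard averaging/uniqueness argument, since the relevant obstructions live in $G$-invariant de Rham cohomology, which in the left-invariant situation reduces to $H^2_{\mathrm{CE}}(\lie g)[[\hbar]]$.
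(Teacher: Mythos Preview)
Your proposal is correct and follows essentially the same route as the paper: identify the invariant Kontsevich class of the left-invariant star product as $[\hbar\Pi]_G$ via Section~\ref{Sec: KontGutt}, then pass to the Fedosov side and read off $\frac{[\omega]}{\hbar}$.

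The only substantive difference lies in how the obstacle you flag is resolved. You propose an averaging/uniqueness argument to make the Kontsevich--Fedosov intertwiner $G$-invariant; note that averaging is not available in general since $G$ need not be compact (symplectic Lie groups typically are not), so only the cohomological uniqueness route would survive, and that remains somewhat vague as stated. The paper instead observes that the explicit equivalence of \cite{Stefan} is built from a chosen connection, and that a $G$-invariant connection exists because $G$ acts properly on itself from the left; plugging an invariant connection into the construction yields an invariant equivalence directly. This is more concrete than the abstract obstruction argument and sidesteps the compactness issue entirely.
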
 

\begin{proof}
From the discussion in Section \ref{Sec: KontGutt}, we know that the invariant Kontsevich class of the of the star product induced by Drinfel'd's twist 
is given by $[\hbar \Pi]_G$. To compute its invariant Fedosov class, we observe that in 
\cite{Stefan}, the authors construct an explicit equivalence between the Kontsevich-star product obtained by the 
globalization procedure from \cite{D2005} and a Poisson structure $\pi$ and the Fedosov star product obtained by the 
original Fedosov construction \cite{Fedosov1994} with the symplectic structure $\pi^{-1}$. 
If additionally a Lie group $G$ acts on the manifold acts on the manifold, it can be shown that this equivalence can be 
choosen to be $G$-invariant, if one can find a invariant connection, which is a straightforward computation.
Note that a Lie group acts always properly from the left 
on itself and hence it is possible to find an invariant connection, and with the above discussion, it is clear that the 
Fedosov star product constructed from $\Pi^{-1}$ is equivalent to the Drinfel'd twist. By the 
definition of the (invariant) Fedosov classes, the class of the Fedosov star product constructed by $\Pi^{-1}$ has Fedosov class $\frac{[\omega]}{\hbar}$ and so has hence the Drinfel'd twist.
\end{proof}

\bibliographystyle{plain}
\bibliography{references}

\end{document}